\newcommand{\bu}{\mathbf{u}}
\newcommand{\balpha}{\bm{\alpha}}
\newcommand{\btheta}{\bm{\theta}}
\newcommand{\budget}{\mathcal{B}}
\title{Data-Driven Control of the COVID-19 Outbreak via Non-Pharmaceutical Interventions: A Geometric Programming Approach\thanks{Submitted to the editors on November 1, 2020.
\funding{This work was supported, in part, by the National Science Foundation under awards NSF-TRIPODS-1934876, CAREER-ECCS-1651433, NSF-III-200884556, and the Rockefeller Foundation.}}}
\author{Mikhail Hayhoe\thanks{Corresponding author. University of Pennsylvania, Philadelphia, PA (\email{mhayhoe@seas.upenn.edu}).}
\and Francisco Barreras\thanks{University of Pennsylvania, Philadelphia, PA (\email{fbarrer@sas.upenn.edu}, \email{preciado@seas.upenn.edu}).}
\and Victor M. Preciado\protect\footnotemark[3]}
\begin{document}
\maketitle
\begin{abstract}
In this paper we propose a data-driven model for the spread of SARS-CoV-2 and use it to design optimal control strategies of human-mobility restrictions that both curb the epidemic and minimize the economic costs associated with implementing non-pharmaceutical interventions. We develop an extension of the SEIR epidemic model that captures the effects of changes in human mobility on the spread of the disease. The parameters of our data-driven model are learned using a multitask learning approach that leverages both data on the number of deaths across a set of regions, and cellphone data on individuals' mobility patterns specific to each region. We propose an optimal control problem on this data-driven model with a tractable solution provided by geometric programming. The result of this framework is a mobility-based intervention strategy that curbs the spread of the epidemic while obeying a budget on the economic cost incurred. Furthermore, in the absence of a straightforward mapping from human mobility data to economic costs, we propose a practical method by which a budget on economic losses incurred may be chosen to eliminate excess deaths due to over-utilization of hospital resources. Our results are demonstrated with numerical simulations using real data from the Philadelphia metropolitan area.


\end{abstract}
\begin{keywords}
Epidemiology, mathematical modeling, multitask learning, optimal control, geometric programming
\end{keywords}
\begin{AMS}
92D30, 93A30, 49N90, 68T05, 90C30
\end{AMS}
\section{Introduction}
Ever since the first COVID-19 case was reported on December 31st 2019 \cite{sitrep_first}, the  SARS-CoV-2 pandemic has spread world-wide, reaching alarming levels of spread and severity \cite{sitrep_who}. The response to the first wave of COVID-19 by governments was the implementation of large scale non-pharmaceutical interventions (NPIs) ranging from contact tracing, quarantines and mask usage, to more aggressive measures like city wide shelter-in-place orders, air-travel restrictions and closures of non-essential businesses \cite{nbcstayhome}. In the absence of pharmaceutical treatment, prevention, or herd immunity, NPIs remain the only tool to curb the spread of the epidemic. At the same time, governments across the world have began implementing strategies to relax mobility restriction measures and reactivate the economy \cite{kaplan_frias_2020} while, at the same time, preventing the collapse of their healthcare systems. However, relaxing mobility restrictions too fast or carelessly can result in resounding waves, as we are currently observing for the case of COVID-19. In fact, as long as enough people in the population are susceptible, the danger of recurrent waves is not only real, but probable. In this situation, it is of utmost societal importance to develop reopening strategies in a principled manner utilizing the wealth of data readily available.

Several epidemic models have been proposed in the recent literature to simulate the effects of social distancing on the evolution of the pandemic; see, e.g., \cite{achterberg2020comparing, bhouri2020covid, chang2020mobility}. Although the majority of epidemic models in recent years are variations of the seminal mathematical models on theoretical epidemiology (see \cite{nowzari2016analysis} and references therein), the availability of rich datasets describing human mobility and behavior is rapidly changing the field of mathematical modelling of epidemics. Companies like Google, Foursquare, Safegraph, Baidu and others, have provided public access to massive datasets describing human mobility, enabling the development of data-driven epidemic models capturing the effects of mobility restrictions. Indeed, the choices faced by decision makers regarding disease management involve the use of multiple control actuations such as vaccination, quarantine, treatment or, as is the case for COVID-19, non-pharmaceutical interventions such as social distancing. These decisions must face the trade-off of minimizing the impact of the disease and the economic cost associated with the implementation of non-pharmaceutical interventions.  

In order to increase predictive power and utility for policy decision-makers, epidemic models have gradually increased their complexity to account for a multitude of features of real epidemics such as disease-specific compartmental models \cite{van2011gleamviz}, resurgence \cite{watts2005multiscale}, multi-scale effects \cite{hayhoe2018model}, seasonality \cite{balcan2010modeling}, differential risk structure in the population and healthcare system capacity \cite{ferguson2020report, lorch2020spatiotemporal}, among others. This increased sophistication in the modeling often comes at the cost of mathematical intractability, and most recommended interventions are heuristics based on simulations \cite{achterberg2020comparing, aleta2020modelling, balcan2010modeling,  birge2020controlling,chang2020mobility, ferguson2020report, lorch2020spatiotemporal}. Although informative for certain scenarios, these proposed interventions are not the result of rigorously formulated optimal control problems.

Conversely, the control of epidemics does not usually admit straightforward solutions from optimal control theory due to the presence of nonlinearities and/or the lack of convexity \cite{nowzari2016analysis}. Some important theoretical results formulate control problems as a static optimal resource allocation aiming to asymptotically drive the epidemic to extinction \cite{birge2020controlling, nowzari2015optimal, preciado2013optimal, preciado2014optimal, van2009virus, wang2003epidemic}. On the other hand, applications of Pontryagin's maximum principle (PMP) can be used to find exact solutions to resource allocation problems under some variations of the SIS and SEIR, for example in \cite{eshghi2015optimal, khouzani2011market, yan2008optimal}. Unfortunately, this approach does not easily generalize to other compartmental models, as solving the two-point boundary value problem that results from the PMP is intractable, in general, and its solution requires additional information about the structure of the optimal controller. Furthermore, the application of classical optimal control tools have found limited applicability in real epidemics since they cannot easily incorporate real data. 

A practical concern is whether it may be possible to design optimal control strategies based on mobility restrictionstaking in to account the impact on the economy. In this paper, we propose a data-driven model of the spread of COVID-19 and propose a data-driven optimal control problem that directly minimizes the number of predicted cumulative deaths by implementing mobility resctrictions in the population. We use real mobility data from Google \cite{googlemob} to learn a nonlinear mapping representing the impact of human mobility on the parameters of a dynamical epidemic model and propose a nonlinear, nonconvex optimal control problem that can be solved using tools from geometric programming \cite{boyd2007tutorial}.

Our model consists of an extension of the classic SEIR model, augmented with compartments for asymptomatic and hospitalized agents. We assume that the rate at which agents become infected in any given day is a function of the mobility trends in the population for that same day, reflecting the fact that an increase in mobility leads to more infections. We rely on data from Google's COVID-19 Community Mobility Reports \cite{googlemob} to capture the changes in visitation patterns to different Places of Interest (POIs). This mobility data consists of several time series measuring visits to various categories of places like Retail \& Recreation, Grocery \& Pharmacy, and Workplaces. The dataset is organized into separate time series for all counties in the United States, and measures visits to multiple categories of places against a benchmark established in January and February of 2020. The key point is that a decision maker can enforce restrictions on the number visits to each of these categories to reduce the spread of the epidemic while incurring a cost to the economy. Our objective is then to design an optimal strategy to contain the spread of COVID-19 while taking in to account the economic cost associated with these mobility restrictions.

The structure of the paper is as follows. In \Cref{sec:background} we introduce the notation used as well as some necessary background in geometric programming. In \Cref{model} we discuss the specifics of our data-driven model, consisting of a mobility layer and an epidemic layer. In \Cref{sec:learning} we discuss the details of our learning strategy to identify the parameters of the model. In \Cref{sec:optimal} we present our optimal control framework and present simulations showing the effectiveness of our method. In \Cref{sec:conclusion} we conclude and discuss possibilities for further research.

\section{Background and Notation}
\label{sec:background}
Throughout this paper bold characters are used to denote vectors and upper-case characters denote either matrices or compartments of the epidemic model. For the following definitions let $x_1, \dots, x_n \geq 0$ denote $n$ non-negative variables, and let $\mathbf{x} = (x_1, \dots, x_n)$. 

\begin{definition}[Monomial]
A function $f(\mathbf{x})$ is called a \textit{monomial} if it has the form
\begin{align*}
    f(\mathbf{x}) = c x_1^{a_1}x_2^{a_2}\dots x_n^{a_n},
\end{align*}
for $a_1, \dots, a_n \in \mathbb{R}$ and $c>0$.
\end{definition}
\begin{definition}[Posynomial]
A sum of one or more monomials is called a \textit{posynomial}, that is, a function of the form
\begin{align*}
    f(\mathbf{x}) = \sum_{i=1}^k c_i x_1^{a_{i,1}} x_2^{a_{i,2}}\dots x_n^{a_{i, n}}.
\end{align*}
\end{definition}
Since posynomials admit negative exponents but do not admit negative coefficients they are not necessarily polynomials, and vice versa. We remark that posynomials are closed under addition, multiplication, and positive scalar multiplication. This implies that if the entries of two matrices $A \in \mathbb{R}^{m\times k}$ and $B\in \mathbb{R}^{k\times n}$ are posynomials of the same variables, then so are the entries of their product $AB$, since $[AB]_{i,j} = \sum_{t=1}^k A_{i,t}B_{t,j}$, which is a sum of products of posynomials. This result extends trivially to the product of an arbitrary number of matrices with posynomial entries.

\begin{definition}[Convex in log-scale]
A function $f(\mathbf{x})$ is convex in log-scale if the function $F(y) := \log f(\exp (\mathbf{y}))$ is convex (where the exponentiation is component wise).
\end{definition}
A careful application of Hölder's inequality shows that posynomials are convex in log-scale.

We solve the epidemic control problems presented herein using a quasi-convex optimization framework called \textit{geometric programming} \cite{boyd2007tutorial, boyd2004convex}, which has found wide applicability in fields such as communication systems \cite{chiang2005geometric}, epidemiology \cite{preciado2014optimal}, and control \cite{ogura2019geometric}, among others. A \textit{geometric program} (GP) is a mathematical optimization program of the form
\begin{align*}
        \underset{\mathbf{x}}{\text{minimize}}\quad& f(\mathbf{x})\\
    \text{subject to}\quad& q_i(\mathbf{x}) \leq 1 \qquad \text{for } i \in 1, \dots m\\
                     & h_i(\mathbf{x}) = 1 \qquad \text{for } i \in 1, \dots p,    
\end{align*}
\noindent where $f(\mathbf{x}), q_1(\mathbf{x}), \dots, q_m(\mathbf{x})$ are posynomials and $h_1(\mathbf{x}), \dots, h_p(\mathbf{x})$ are monomials. Due to the convexity in log-scale, one can reduce a GP to a convex program by means of the logarithmic change of variables $y_i = \log (x_i)$ and transforming the objective and constraints with the logarithmic transformations $F(\mathbf{y}) = \log f(\exp (\mathbf{y}))$, $Q_i(\mathbf{y}) = \log q_i(\exp (\mathbf{y}))$ and $H(\mathbf{y}) = \log h(\exp (\mathbf{y}))$ to obtain
\begin{align*}
        \underset{\mathbf{y}}{\text{minimize} }\quad& F(\mathbf{y})\\
    \text{subject to}\quad& Q_i(\mathbf{y}) \leq 0 \qquad \text{for } i \in 1, \dots m\\
                     & H_i(\mathbf{y}) = 0 \qquad \text{for } i \in 1, \dots p,
\end{align*}
which is a convex program that can be efficiently solved using, for example, primal-dual interior-point methods; see \cite{dahl2019primal} for more details.

\section{Model} \label{model}

We now describe the epidemiological model under analysis. We consider a region with $N$ individuals and propose a population model with \emph{homogeneous mixing}, i.e.,  every pair of individuals come in contact with a probability that depends on aggregated mobility variables. Population models are commonly used in the absence of granular data on the network of social contacts in a region \cite{nowzari2016analysis}. We assume that each individual can visit POIs belonging to different categories $1, 2, \dots, K$; let $\mathbf{m}(t)= (x_1(t), m_2(t), \dots, m_K(t))$ denote a vector of human mobility variables capturing the volume of visits to each of those categories at a particular discrete time $t$ (e.g., days). In particular, we use publicly available mobility data from Google's COVID-19 Community Mobility Reports \cite{googlemob} which capture daily changes in visitation patterns to public places, for example, Retail \& recreation, Grocery \& pharmacy, and Parks, as well as time spent at work. For each category, Google reports the relative change in visits compared to a baseline of mobility measured before the lockdown measures took place. This baseline corresponds to the median daily visits to each category over the period comprising January 3 through February 6, 2020.

Our model consists of two layers: a mobility layer and an epidemic layer.

The mobility layer captures the effect of human mobility on the spread of the disease and influences the dynamics taking place on the epidemic layer. In the epidemic layer, we consider an extension of the classic SEIR epidemic model \cite{brauer2019mathematical, martcheva2015introduction} which explicitly accounts for asymptomatic hosts and hospitalizations. Each of the individuals in a region belongs to one of seven possible compartments described below. In this model, $S(t)$ represents the number of individuals susceptible to becoming infected at a discrete time $t$. In our optimal control problems, we will consider a finite horizon $T$ over which we can assume an almost constant number of susceptible individuals; hence, we assume $S(t) \approx S_0$ for all $0 \leq t \leq T$. The variable $E(t)$ represents the number of individuals who have contracted the virus (exposed) but are in an incubation period at time $t$. After the incubation period, agents can move to one of the infectious compartments; $I(t)$ represents the number of symptomatic individuals and $A(t)$ represents the number of asymptomatic individuals. The asymptomatic compartment is included since asymptomatic individuals play a crucial role in the spread of COVID-19, with transmission rates that are different from symptomatic individuals \cite{bai2020presumed, gandhi2020asymptomatic}. Asymptomatic individuals eventually recover on their own and move on to the recovered compartment, represented by the variable $R(t)$. Symptomatic individuals can recover on their own, or their symptoms can worsen and they subsequently require hospitalization, in which case they are moved into a hospitalized compartment, represented by the variable $H(t)$. Since hospital capacity is a principal concern with the treatment of COVID-19, we include this compartment to constrain the control problems described in \Cref{sec:optimal}. In particular, our mobility-based control input will be constrained to prevent a hospital capacity overflow. Finally, agents that are hospitalized may either recover and transition to the compartment represented by $R(t)$ or may die, and subsequently transition to the compartment represented by $D(t)$. With explicit data on the number of deaths in every region, we may train our model to predict the population of this compartment. We make the simplifying assumption that only individuals with severe symptoms are at a risk of dying and, hence, all of them are eventually hospitalized. 

\begin{figure}[ht!]
    \centering
    \includegraphics{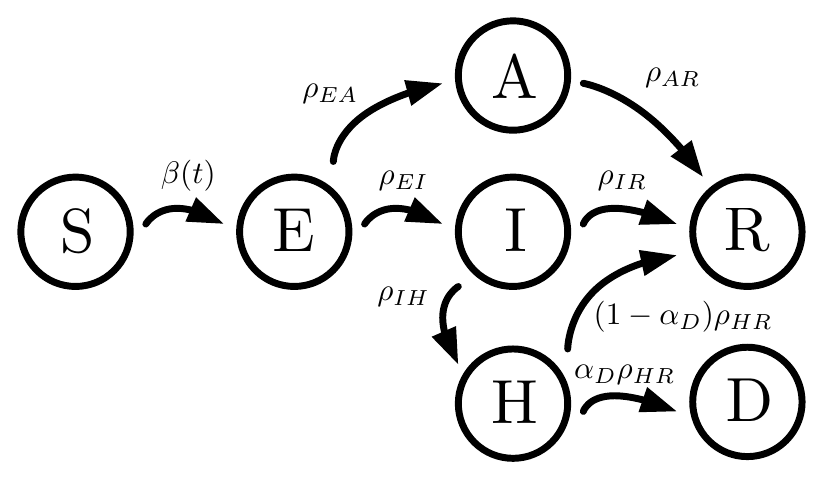}
    \caption{Illustration of the epidemic model under consideration.}
    \label{fig:seird}
\end{figure}

All parameters related to the dynamics of this model are summarized in \cref{tab:parameters}. Using these parameters, the discrete-time evolution of the number of individuals in each compartment, illustrated in \cref{fig:seird}, is given by:
\begin{align}
    E(t+1) &= \left(1-\rho_{EI}-\rho_{EA}\right)E\left(t\right) + S_{0}\beta\left(t\right)(\gamma_{A}A\left(t\right) + I\left(t\right)), \label{eq:dyn_1}\\ 
    I(t+1) &= \left(1-\rho_{IR}-\rho_{IH}\right)I\left(t\right)+\rho_{EI}E\left(t\right), \label{eq:dyn_2}\\
    A(t+1) &= \left(1-\rho_{AR}\right)A\left(t\right)+\rho_{EA}E\left(t\right), \label{eq:dyn_3}\\
    H(t+1) &= \left(1-\rho_{HR}-\rho_{HD}\right)H\left(t\right)+\rho_{IH}I\left(t\right), \label{eq:dyn_4} \\
    R(t+1) &= R(t) + \rho_{IR}I(t) + \rho_{AR}A(t) + (1 - \alpha_D)\rho_{HR}H(t). \\
    D(t+1) &= D(t) + \alpha_D\rho_{HR}H(t) \label{eq:D_t}
\end{align}

In our model, we assume that susceptible individuals can transition into the exposed compartment when in contact with either Infected (symptomatic) or Asymptomatic individuals. We assume that the rate at which asymptomatic individuals infect others is weighted by a constant (unknown) parameter $\gamma_A$. We also assume that the portion of hospitalized individuals who die, relative to those that recover, is equal to an unknown constant $\alpha_D$. In \Cref{sec:learning}, we will introduce a methodology to learn these (and other) unknown parameters in our model. As mentioned above, this model is intended to solve optimal control problems over a finite time horizon $T$ over which we can assume that the number of susceptible individuals at any time $0 \le t\le T$, $S(t)$ is well approximated with a constant, hence, we set $S(t) = S_0$; hence, we omit the dynamical equation corresponding to $S(t)$. Moreover, this assumption linearizes the dynamics of the states. As we will see in \Cref{sec:optimal}, this linearization ensures that the entries of the state vector at any given time are posynomials on the parameter $\beta(t)$. However, we incorporate a non-linear dependency of the parameter $\beta(t)$ on the mobility restriction variables, which we will use as our external control variable, rendering the resulting model non-linear and multiplicative in the control input.

\begin{table}[ht!]
\centering

\begin{tabular}{ll}
\hline
Parameter  & Description                                                                           \\ \hline \hline
$\beta (t)$ & Rate at which susceptible individuals become infected due to \\
    & contacts with infectious individuals at  time $t$;  $\beta(t)$ is a function \\
    & of the mobility variables $\mathbf{m}(t)$ at time $t$   \\
$\gamma_A$  & Weight representing lower risk of infection when infectious \\
    & individuals are asymptomatic \\
$\rho_{EI}$ & Rate at which exposed individuals become symptomatic\\
$\rho_{EA}$ & Rate at which exposed individuals become asymptomatic  \\
$\rho_{IR}$ & Rate at which symptomatic individuals recover on their own  \\
$\rho_{IH}$ & Rate at which symptomatic individuals develop severe symptoms\\
            &  and become hospitalized\\
$\rho_{AR}$ & Rate at which asymptomatic individuals recover on their own \\
$\rho_{HR}$ & Rate at which hospitalized individuals recover     \\
$\alpha_D$ &Proportion of hospitalized individuals that die, relative to those \\
    & that recover \\
\hline \hline                                        
\end{tabular}

\caption{Summary of parameters in epidemic model.}
\label{tab:parameters}
\end{table}

The mobility layer of our model incorporates the effects of non-pharmaceutical interventions, such as social distancing and other forms of mobility restrictions, which a decision maker may employ to curb the spread of the epidemic. By reducing human mobility, a decision maker induces fewer contacts between susceptible and infected individuals and, thus, reduces the risk of infection. In particular, we relate the infection rate $\beta(t)$ to a time series $\mathbf{m}(t)$ of human mobility variables by means of an unknown function $f(\cdot)$. We choose $f$ to be a parametric function whose parameters will be learned from data (described in detail in \Cref{subsec:learn_mobility}).

In order to employ non-pharmaceutical interventions, a decision maker designs a mobility control strategy to set the human mobility variables $\mathbf{m}(t)$ for some finite horizon $t \in \{0,\ldots,T\}$. In mathematical terms, the decision maker designs an input $\{\bu(t)\}_{t=0}^T$ which affects future values of the mobility variables. For simplicity, we assume an identity mapping between mobility and the input, so that $\mathbf{m}(t) = \bu(t)$, and $\bu(t)$ is in some set of admissible actions $\mathcal{U}$.

Intuitively, lower values of $\bu(t)$ correspond to more restrictions on human mobility. The decision maker may have fine-grained control over their control strategy, for example by closing individual establishments, imposing occupancy limits, or restricting hours of operation, and as such we treat the individual components of the control action $u_k(t)$ as continuous variables. Moreover, some categories may have different admissible actions, e.g., it may not be possible to close down all pharmacies but closing all gyms is reasonable. In mathematical terms, we will consider a set of allowable control actions $\mathcal{U}$ is that it be described using posynomial inequalities and monomial equalities. 

Unfortunately, implementing mobility restrictions in this manner cannot be done without incurring a financial loss. Closure of businesses causes economic losses, which need to be taken into consideration when selecting an appropriate control strategy. In particular, applying the temporal control strategy $\bu(t)$ of mobility restrictions incurs a cost $C(\bu(t))$ which we assume to be monotonically decreasing with $\bu$ and convex in log-scale, reflecting that the costs on society of restricting mobility are marginally increasing.

\section{Learning the parameters from data} \label{sec:learning}

\begin{figure}[ht!]
    \centering
    \includegraphics[width=\linewidth]{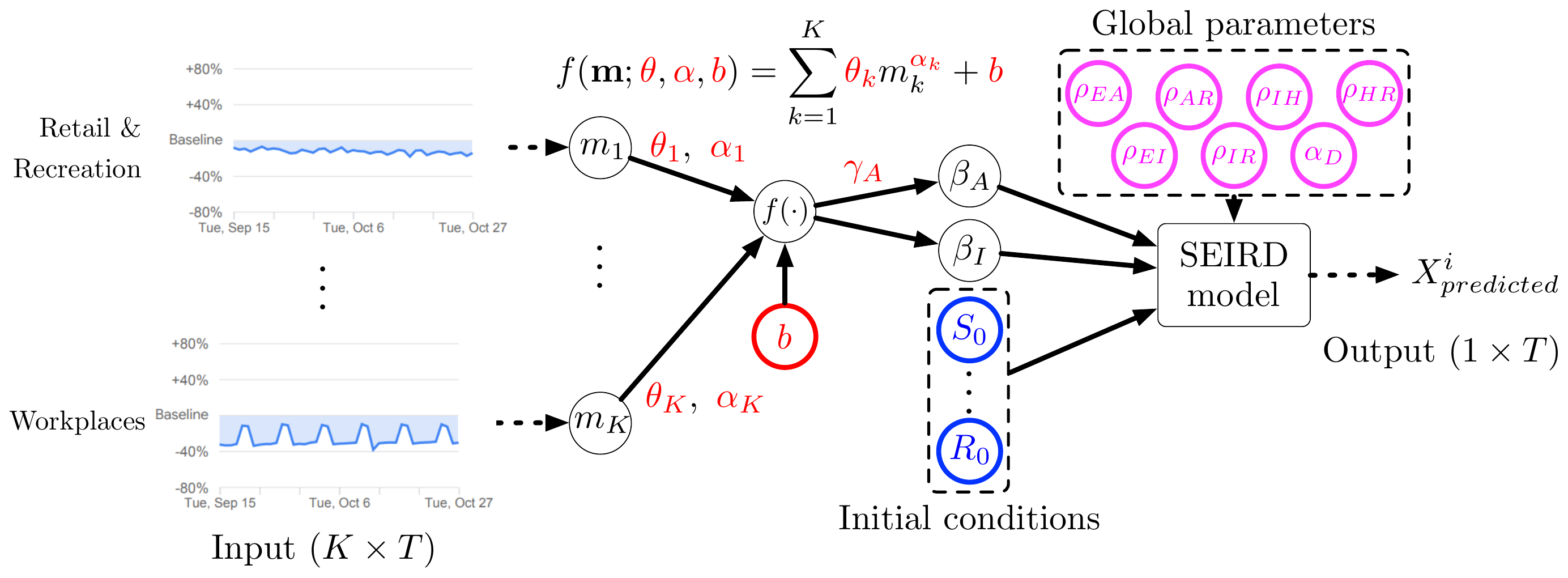}
    \caption{The learning pipeline for a given region. Global parameters, learned using all available data, are shown in magenta; initial conditions, learned for each region, are shown in blue; mobility mapping parameters, again different across regions, are shown in red.}
    \label{fig:pipeline}
\end{figure}

Several recent epidemic prediction methods opt to set some or all of the parameters in their models to estimations from the medical and virology literature (e.g. \cite{achterberg2020comparing, aleta2020modelling, bhouri2020covid, birge2020controlling, chang2020mobility}). However, these parameters often have wide confidence intervals and are commonly inferred from statistical models that do not take in to account the effects of social distancing and hospital capacity \cite{ali2020serial}. In contrast to these approaches, our model is entirely data-driven in that all parameters (including initial conditions) used are learned directly from data. In particular, we employ a multitask learning approach \cite{caruana1997multitask} by leveraging both data on the number of deaths across a set of regions, and mobility data describing how often individuals in a region visit different points of interest. Indeed, certain parameters of the epidemic model are intrinsic to the disease; hence, they do not depend on the geographical location from which data is collected. As such, when calibrating this model to a given region, we can benefit from the data from other regions by employing a multitask learning framework, providing better parameter estimates and avoiding overfitting. Towards this goal, we pool data from multiple regions (e.g., US counties) and minimize a global cost function in which the global parameters are shared across regions but the mobility parameters and initial conditions are specific to each region. The learning pipeline from data to predictions for a single region is shown in \cref{fig:pipeline}. We fit our models using publicly available mobility data from Google's COVID-19 Community Mobility Reports \cite{googlemob} which captures daily changes in visitation patterns to public places, for example, Retail \& Recreation, Grocery \& Pharmacy, and Parks, as well as time spent at Workplaces. This dataset is organized as different time series for six different categories. For each category, Google reports the relative change in visits compared to a baseline of mobility measured before the lockdown measures took place. This baseline corresponds to the median daily visits to each category over the period comprising January 3 through February 6, 2020. Furthermore, we use public data from The New York Times, based on reports from state and local health agencies \cite{nytimes}, consisting of daily and cumulative caseloads and deaths attributed to COVID-19 in The United States. To account for inconsistencies and lags in reporting, we compute a 7-day rolling average on the original time series for the calibration of our model.

As mentioned above, there are two layers to our model, namely (1) the \emph{mobility layer}, which is a mapping from mobility data to the infection rate $\beta(t)$ (described in \cref{subsec:learn_mobility}) and (2) the \emph{epidemic layer}, which describes the dynamics of the disease itself (discussed in \cref{subsec:learn_model}). Since parameters such as the latency period, ratio of infected individuals who develop symptoms, and case fatality ratio are intrinsic to the disease and should not vary greatly based on the geographical area being studied, we group these together across regions as global parameters and learn them jointly with all the available data. However, the  mapping from mobility data to the infection rate and initial conditions of the regional epidemic are dependent on the locality, and thus they are learned using only the data from their region.

\subsection{The mobility layer}\label{subsec:learn_mobility}

To learn the function $f: \mathbf{m} \mapsto \beta$ from mobility data to the infection rate, we must first select an appropriate class of functions for such a mapping. Although we could use any parametric familt of functions, such as neural networks, to estimate $f$, not all choices are tractable. In particular, neural networks may provide great prediction performance but would render an intractable control problem. In order to obtain a tractable control problem, we chose to model the function $f$ using a parametric posynomial function \cite{boyd2007tutorial}. As we will show in \Cref{sec:optimal}, this choice allows us to use geometric programming to efficiently solve several optimal control problems of interest. Thus, we model $f$ in a parametric way as
\begin{align}\label{eq:mobility_map}
    f(\mathbf{m}; \btheta, \balpha, b) = \sum_{k=1}^K \theta_k m_k^{\alpha_k} + b.
\end{align}
From a practical standpoint, this posynomial approximation is justified because $\beta$ can be viewed as the product of the contact rate (the expected number of contacts an individual has with others) and the transmission risk, which is constant over time. Moreover, the number of contacts within a category should exponentially increase with the number of visits to points of interest in that category. Since the mobility data is stratified across $K$ different categories, we allow the parameters to be different across the categories. Thus, in the parametric function $f(\mathbf{m}; \btheta, \balpha, b)$ the probability of transmission is captured by $\btheta = (\theta_1,\ldots,\theta_K) \in \mathbb{R}_{\geq 0}^K$, the exponential growth of infectivity is captured by $\balpha = (\alpha_1,\ldots,\alpha_K) \in \mathbb{R}^K$, and the bias term $b$ accounts for potentially unmodeled infections.

Since susceptible individuals may become infected by either symptomatic or asymptomatic individuals, the mobility mapping $f$ is incorporated into the epidemic layer in two terms, as seen in \cref{eq:dyn_1}. Firstly, it is used to model new infections from symptomatic individuals via the term $\beta(\mathbf{m}(t))S_0 I(t)$; secondly, we include a weighting term $\gamma_A$ in the term $\gamma_A \beta(\mathbf{m}(t)) S_0 A(t)$ to model the rate of new infections from asymptomatic infectious individuals. Thus, altogether we denote the set of parameters corresponding to the mobility mapping function for region $i$ as $\Psi_{mobility}^i \coloneqq \{\btheta, \balpha, b, \gamma_A\}$.

\subsection{The epidemic layer}\label{subsec:learn_model}
Our model is a latent-space model; hence, the states are not fully observable. Following common practice with these models, we treat the unobserved initial conditions as unknown parameters to be identified from the data. Since the dynamical trajectories of the epidemic are different across regions (e.g., US counties), for each region $i$ we learn a set of initial conditions $\Psi_{0}^i \coloneqq \{S_0^i,\ldots,R_0^i\}$. As mentioned previously, we follow a multitask learning approach and, thus, the remaining global parameters, which we assume to be intrinsic to the disease, are shared across all regions and learned collectively using data available from all regions. The set of global parameters is denoted by $\Psi_{global} \coloneqq \left\{\rho_{EA}, \rho_{EI}, \rho_{AR}, \rho_{IH}, \rho_{IR}, \rho_{HR}, \alpha_D\right\}$, which includes all clinical parameters that depend on the nature of the virus alone (i.e., they are not influenced by social mobility).

\subsection{Simulated predictions for Philadelphia and surrounding counties}\label{subsec:learn_simulations}

\begin{figure}[ht!]
    \centering
    \subfloat[Incident deaths in Baltimore County, MD]{\includegraphics[width=0.31\linewidth]{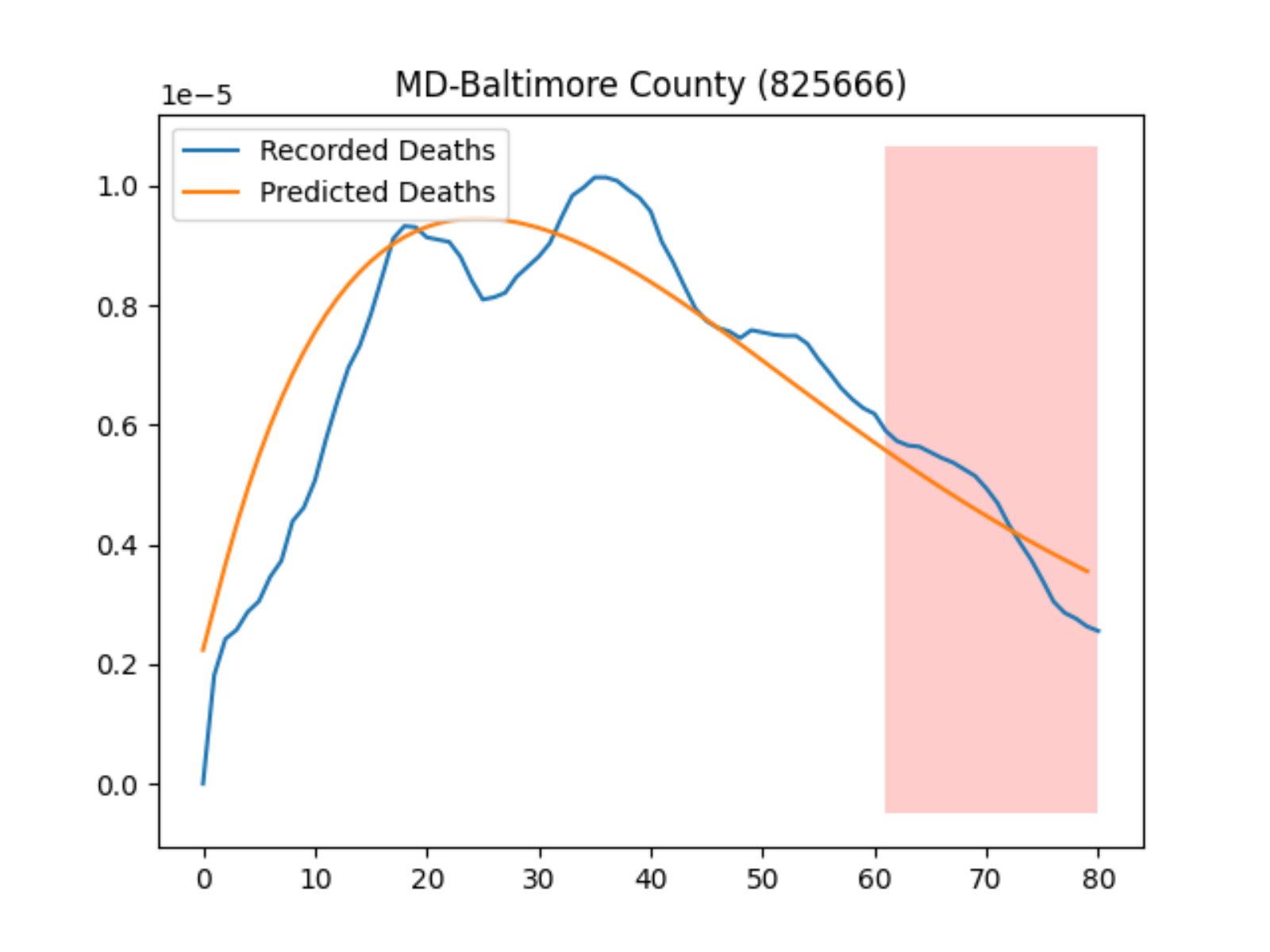}}
    \quad
    \subfloat[Incident deaths in Montgomery County, PA]{\includegraphics[width=0.31\linewidth]{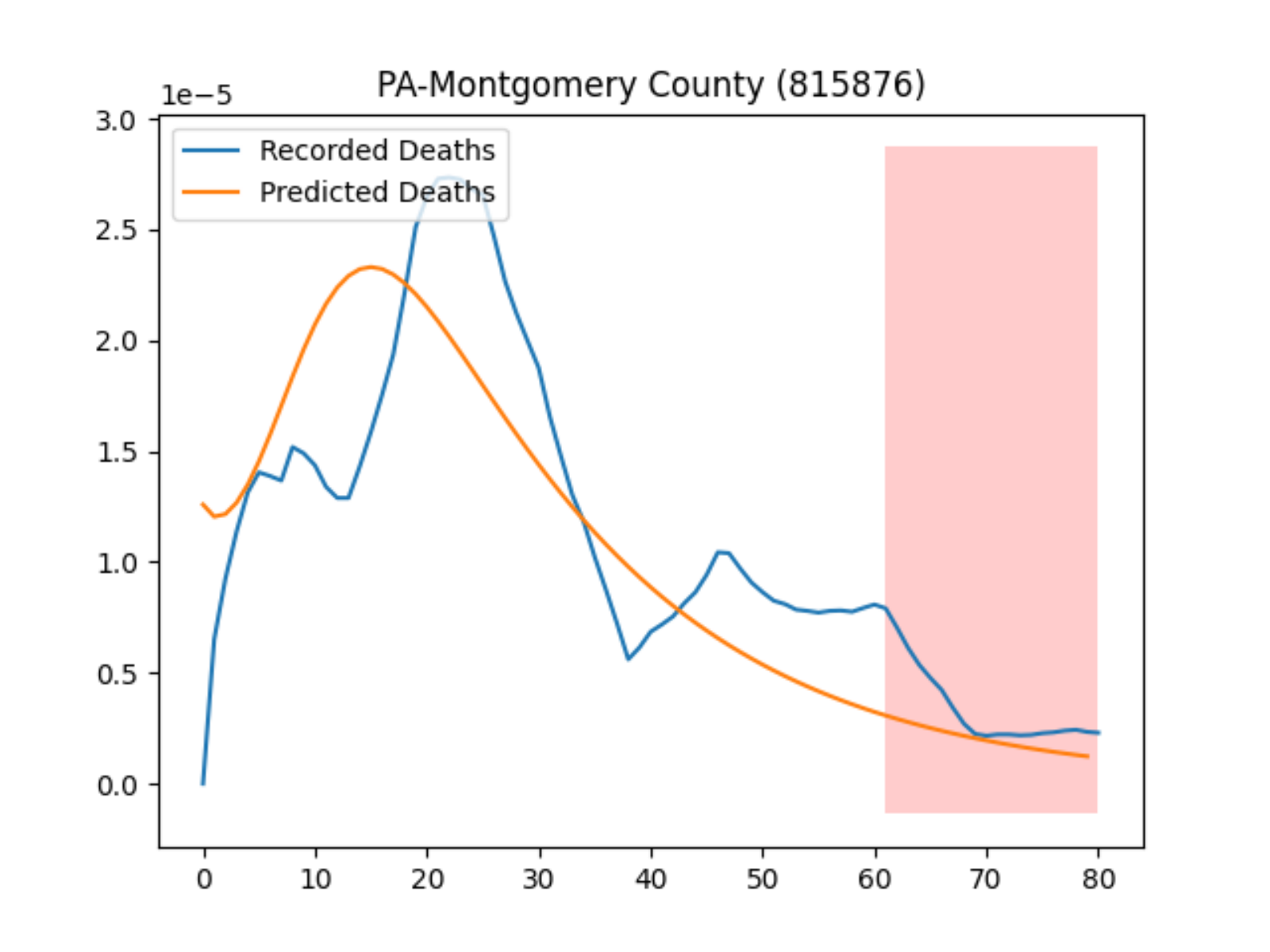}}
    \quad
    \subfloat[Incident deaths in Philadelphia County, PA]{\includegraphics[width=0.31\linewidth]{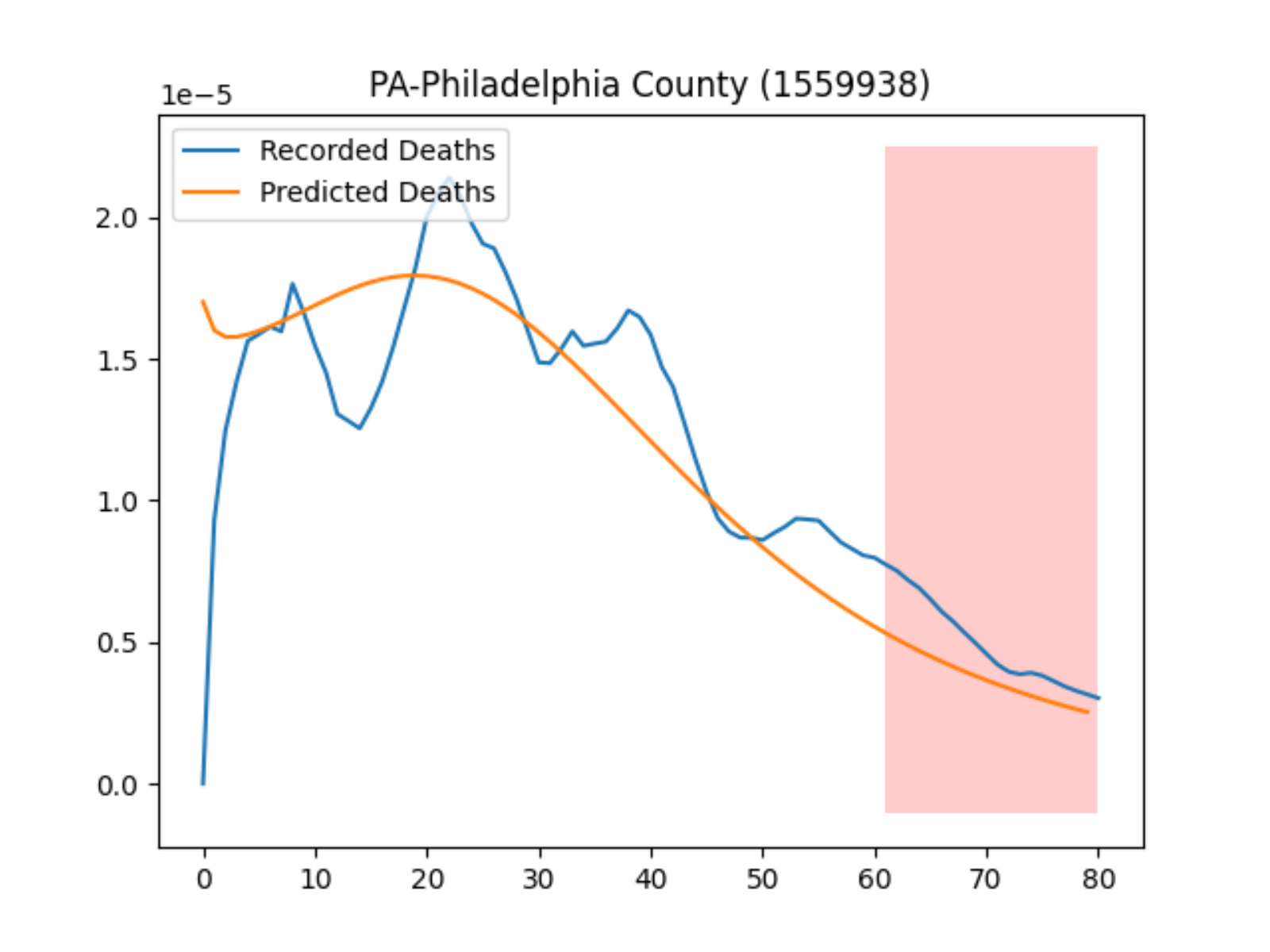}}
    \\
    \subfloat[Cumulative deaths in Baltimore County, MD]{\includegraphics[width=0.31\linewidth]{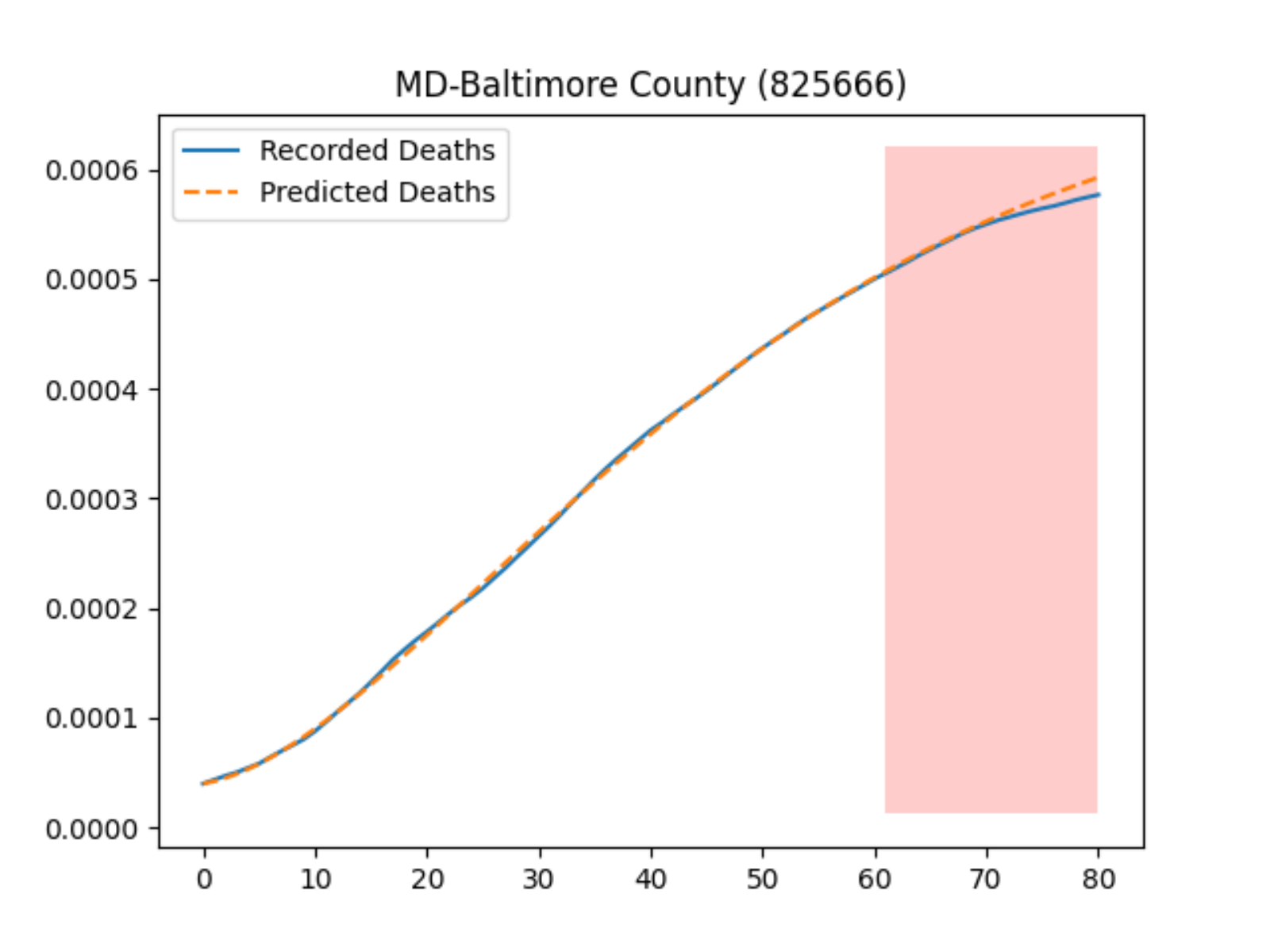}}
    \quad
    \subfloat[Cumulative deaths in Montgomery County, PA]{\includegraphics[width=0.31\linewidth]{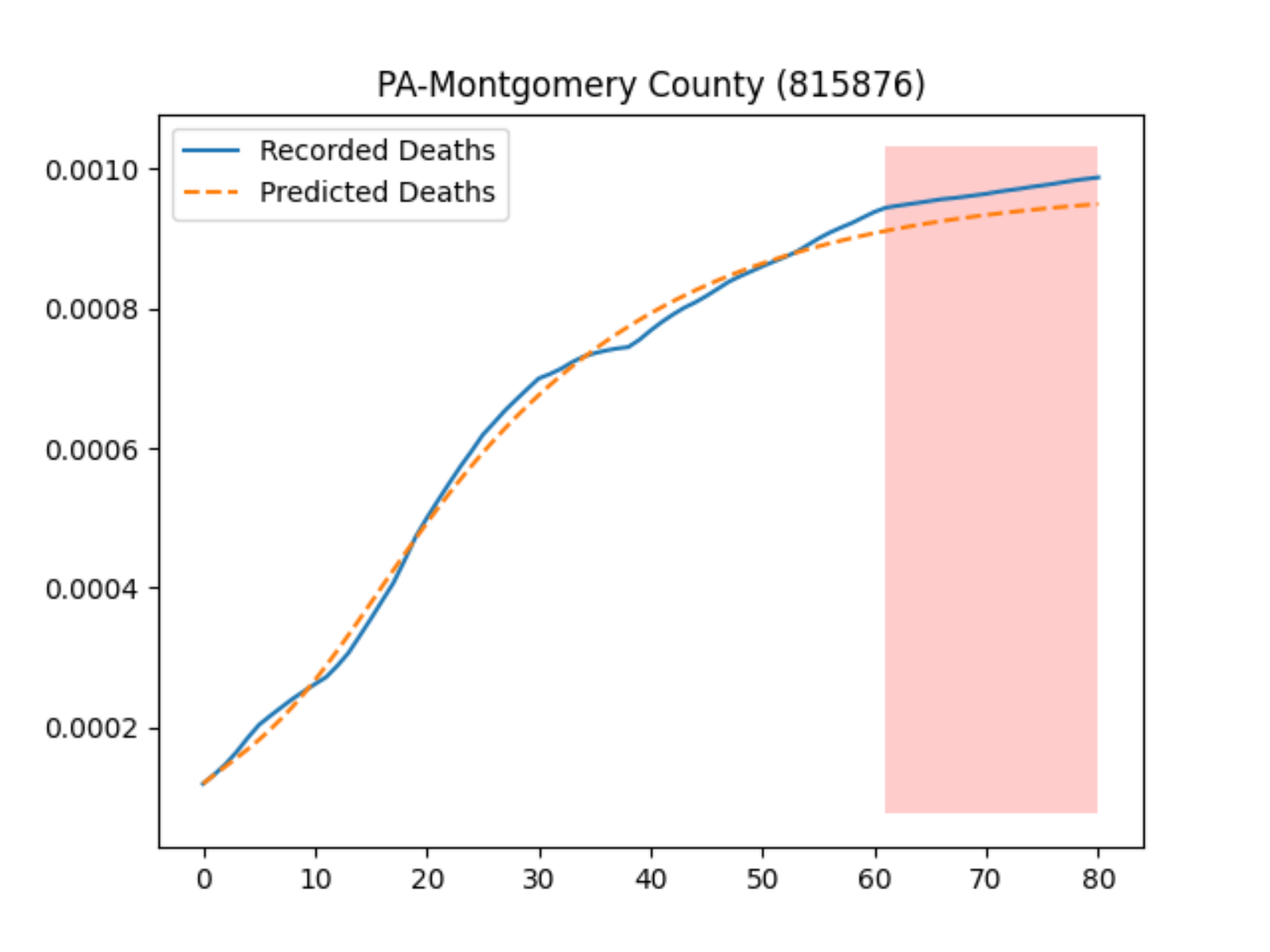}}
    \quad
    \subfloat[Cumulative deaths in Philadelphia County, PA]{\includegraphics[width=0.31\linewidth]{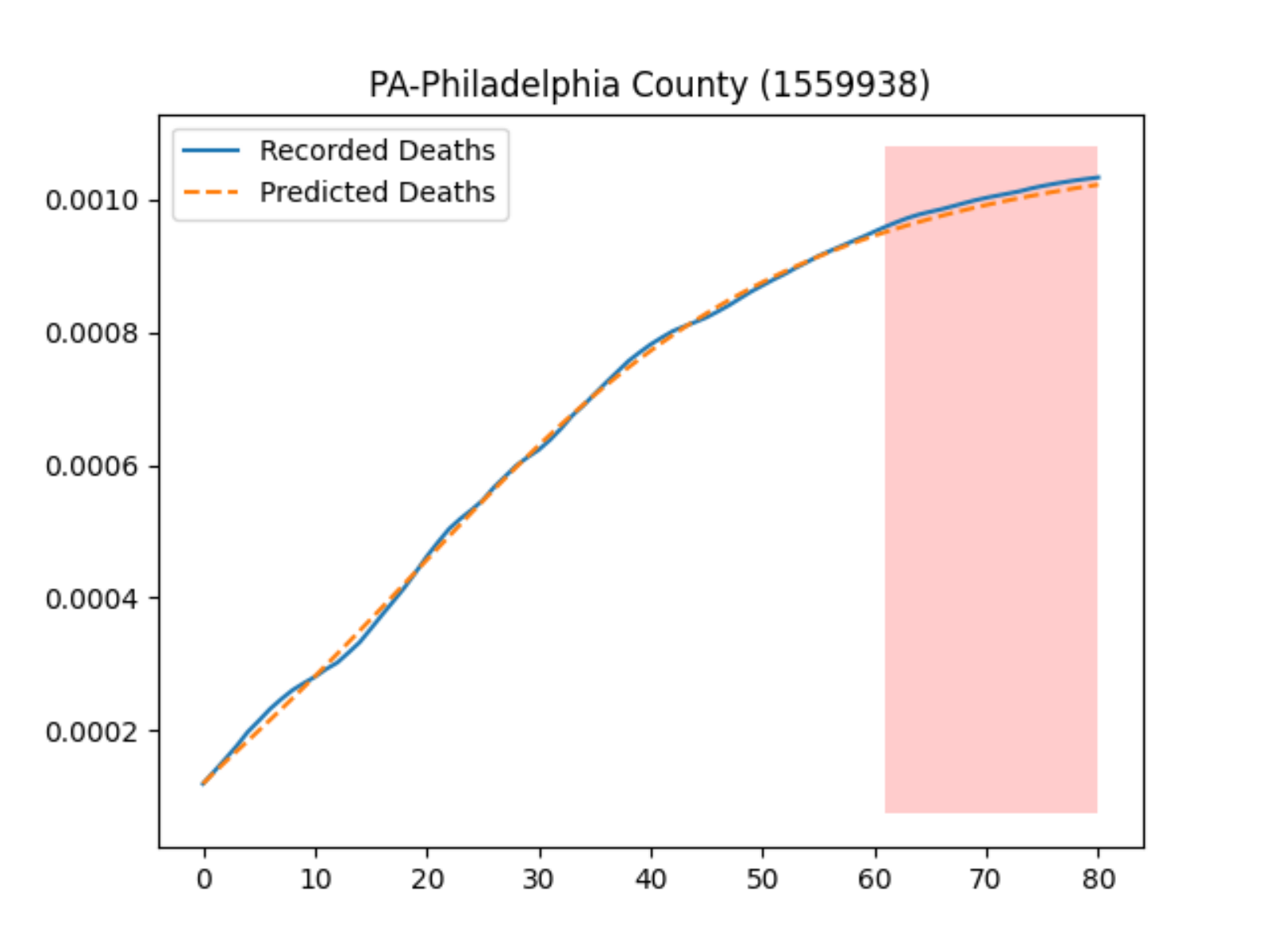}}
    \caption{Example of predicted deaths for several US counties, trained on 60 days of mobility and death count data. Rolling 7-day averages of real data are shown in blue; predictions are shown in orange. The area in white denotes training data, and the area in red denotes test data.}
    \label{fig:predictions}
\end{figure}

In order to validate the predictive accuracy of our data-driven model, we conducted a case study using counties from the Delaware Valley metropolitan statistical area (commonly known as the \textit{Philadelphia metropolitan area}). Using county-level data, we learned both the local mobility mapping functions and initial conditions, as well as the global clinical parameters of the epidemic. Due to the known inconsistencies and lags in reporting of cases and deaths, we use the rolling 7-day average of cumulative deaths for both training and prediction. Formally, if we have training data for $M$ counties over $T$ days, the training loss for the set of parameters $\Psi \coloneqq \Psi_{global} \cup_{i=1}^M \Psi_{mobility}^i \cup_{i=1}^M \Psi_{0}^i$ is the following mean-squared error loss function:
\begin{align}\label{eq:train_loss}
    \ell_{train}(\Psi) = \frac{1}{MT}\sum_{i=1}^M\sum_{t=1}^T \left(\frac{X^i(t)}{N_i} - \frac{\hat{X}^i(t; \Psi)}{N_i}\right)^2,
\end{align}
where $N_i$ is the population of region $i$, $X^i(t)$ is the measured rolling 7-day average of cumulative deaths on day $t$ for region $i$, and $\hat{X}^i(t; \Psi)$ is the predicted value for the same region for a set of parameters $\Psi$. We normalize the number of deaths by the population in each county to avoid biasing our predictions towards counties with a larger population.

Our model was trained by computing gradients of the loss function $\ell_{train}$ with respect to all parameters in $\Psi$ via the automatic differentiation package \texttt{autograd} \cite{autograd} and running stochastic gradient descent using \texttt{adam} \cite{adam} over several independent trials using different initial guesses to account for the non-convexity of the training problem. Global clinical parameters were initialized by using plausible values from the medical literature \cite{day2020covid, nishiura2020estimation, lauer2020incubation, he2020temporal, bhouri2020covid, rahmandad2020estimating, pei2020differential, woelfel2020clinical}, while local parameters are initialized randomly in each trial. Different counties are split across random batches, allowing the global parameters (trained using all data in our multitask approach) to converge more quickly, which in turn allows the local parameters to converge to values which fit more closely with the global parameters. From these trials, we select the set of parameters with lowest testing error, and present the predictions using these chosen parameters in \cref{fig:predictions}.

\section{Optimal control using geometric programming} \label{sec:optimal}

Traditional optimal control techniques are not directly applicable to compartmental epidemic models for a number of reasons. First, epidemic models are typically nonlinear and the effect of non-pharmaceutical interventions is not additive, but multiplicative. Therefore, standard techniques such as LQR cannot be readily employed. Furthermore, a direct application of Pontryagin's Maximum Principle results in a high-dimensional two-point boundary value problem for which numerical methods have no convergence guarantees and present scalability issues. Due to the modeling choices proposed in this paper, we obtain a mobility-driven epidemic dynamics amenable to solve certain optimal control problems using tools from geometric programming \cite{boyd2007tutorial}. In particular, we can solve the data-driven optimal control problems aiming to minimize the final number of deaths while respecting budget constraints on the economic costs associated with implementing mobility restrictions, as well as avoiding hospital overflows, with guarantees of global optimality.

We start our analysis by considering the minimization of the final number of deaths. For completeness in our analysis,we allow the inclusion of a daily discount factor $\gamma_D$ and a terminal cost $\gamma_{\infty}$ on the number of deaths at the end of the time horizon $T$. Hence, the decision maker aims to minimize the following objective function:
\begin{align}\label{eq:J}
J \coloneqq \sum_{t=1}^{T-1} \gamma_D^t D(t) + \gamma_{\infty}D(T).    
\end{align}
The discount factor $\gamma_D \in (0,1)$ is motivated by the uncertainty of deaths in the future which might be prevented by interventions not available in the present day. For example, the probability that a vaccine is developed in the future increases as time passes. In particular, $1 - \gamma_D$ can be considered the probability of a vaccine being created on each day in the future, so the probability of no vaccine being widely available by day $t$ is $(\gamma_D)^t$ and, hence, deaths predicted at day $t$ are only accounted for if there is not a vaccine that could prevent them. The terminal cost $\gamma_{\infty}$ illustrates the desire to keep the number of deaths low beyond the time horizon in consideration. For example, let us assume that beyond the time horizon $T$ the epidemic has been curbed and the number of new daily deaths falls below $D(T)$. In the worst case scenario we would have $D(t) = D(T)$ for $t \geq T$, hence,
\[
    \sum_{t=T}^\infty (\gamma_D)^t D(t) = D(T)\frac{\gamma_D^T}{1 - \gamma_D}.
\]
Defining $\gamma_{\infty} := \gamma_D^T/(1 - \gamma_D)$, the discounted number of deaths beyond $T$ is given by the terminal cost $\gamma_{\infty}D(T)$.

Given that the infection rate $\beta(t)$ depends on mobility, we assume that a decision maker can restrict mobility dynamically to curb the number of deaths by designing a mobility strategy $\bu(t)$ so that $\beta(t) = f(\bu(t))$. Furthermore, we assume that $\bu(t)$ is constrained to be within a set $\mathcal{U}$ reflecting that essential businesses cannot be severely restricted and that some mobility restrictions are only partially effective. 


These mobility restrictions incur a cost which could be measured in terms of a pecuniary cost to the economy, absolute number of visits lost by businesses, or impact on the utility of citizens. In our framework, we quantify the economic cost of imposing a mobility control strategy $\bu(t)$ using a cost function $C_t(\bu(t))$ which, in general, can be time-varying; hence, for example, we can use different costs for mobility restrictions on workdays and weekends. We choose to model $C_t(\bu(t))$ as a posynomial on $\bu(t)$, since this is amenable to a geometric programming approach. We investigate the problem of choosing an optimal mobility control strategy $\bu^\star(t)$ that minimizes the number of cumulative deaths while keeping the total cost of the intervention, given by $\sum_{t=0}^{T-1}C_t(\bu(t))$, below a pre-specified budget $\budget$. As we will show, this problem can be formulated as a geometric program; this stems from the fact that the states $H(t)$ and $D(t)$ can be expressed as posynomial functions of the mobility control variables $\bu(t)$, as shown in the following lemma.

\begin{lemma}\label{lem:posynomial}
The functions $H(t)$ and $D(t)$, representing the number of hospitalized individuals and deaths at time $t$, are posynomials on the entries of $\bu(t)$ for $t = 0,1,\ldots,T$.
\end{lemma}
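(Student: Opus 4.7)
The plan is to prove the statement by induction on $t$, showing simultaneously that all five state variables $E(t), I(t), A(t), H(t), D(t)$ are posynomials in the control variables $\bu(0), \bu(1), \ldots, \bu(t-1)$. This stronger statement is needed because the dynamics for $H$ and $D$ are coupled to the other compartments; one cannot isolate them.

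For the base case ($t = 0$), all five states equal their initial conditions $\Psi_0^i$, which are positive constants learned from data, and positive constants are (trivially) posynomials. For the inductive step, assume $E(t), I(t), A(t), H(t), D(t)$ are posynomials in $\bu(0), \ldots, \bu(t-1)$. I would then invoke the closure properties explicitly noted in \Cref{sec:background}: posynomials are closed under addition, multiplication, and positive scalar multiplication. The updates for $I(t+1), A(t+1), H(t+1), D(t+1)$ are each nonnegative linear combinations of the inductive-hypothesis posynomials, provided that the coefficients $1-\rho_{IR}-\rho_{IH}$, $1-\rho_{AR}$, $1-\rho_{HR}-\rho_{HD}$, as well as $\rho_{EI},\rho_{EA},\rho_{IH},\alpha_D\rho_{HR}$, are nonnegative; this is a standard compatibility assumption on transition rates in compartmental models (the per-step leaving probabilities do not exceed one), which is satisfied by the learned parameters. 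These updates therefore produce posynomials in $\bu(0),\ldots,\bu(t-1)$.

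The one step requiring slightly more care is the update for $E(t+1)$, because it introduces the dependency on the new control variable $\bu(t)$ through $\beta(t)=f(\bu(t))$. Here I would use the parametric form \eqref{eq:mobility_map}, $f(\bu(t);\btheta,\balpha,b)=\sum_{k=1}^K \theta_k u_k(t)^{\alpha_k}+b$ with $\theta_k, b \geq 0$, which is by construction a posynomial in the entries of $\bu(t)$. Then $\beta(t)\bigl(\gamma_A A(t)+I(t)\bigr)$ is a product of posynomials in the disjoint variable sets $\{\bu(t)\}$ and $\{\bu(0),\ldots,\bu(t-1)\}$, and so is itself a posynomial on the union, i.e.\ on $\bu(0),\ldots,\bu(t)$. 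Adding the nonnegative scalar multiple $(1-\rho_{EI}-\rho_{EA})E(t)$ keeps it a posynomial, completing the inductive step.

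I expect no serious obstacle beyond bookkeeping; the result is essentially a structural consequence of the linearization obtained by fixing $S(t)\approx S_0$ together with the posynomial choice of $f$. The only subtlety worth flagging in the write-up is that the proof relies on the transition-rate parameters yielding nonnegative self-loop coefficients (so that no subtraction of posynomials occurs), and that all new control variables enter only through $\beta(t)$ in the $E$-equation, which is exactly where the posynomial assumption on the mobility map pays off. This is the sense in which the modeling choices of \Cref{model} and \Cref{subsec:learn_mobility} were tailored to preserve the geometric-programming structure needed in \Cref{sec:optimal}.
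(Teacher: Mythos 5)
Your proof is correct and is essentially the paper's argument in a different presentation: the paper writes the linear dynamics in matrix form and invokes closure of posynomial-entry matrices under products, which is exactly your element-wise induction unrolled, relying on the same ingredients (the posynomial mobility map \cref{eq:mobility_map}, closure of posynomials under addition and multiplication, and the nonnegativity assumptions $\rho_{EI}+\rho_{EA}\le 1$, $\rho_{IR}+\rho_{IH}\le 1$, $\rho_{AR}\le 1$, $\rho_{HR}+\rho_{HD}\le 1$, with $D(t)$ handled at the end as a positive combination of $H$-terms). No gap to report.
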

\begin{proof}
See \cref{app:lem_posynomial_proof}.
\end{proof}

Since a positively weighted sum of posynomials is also a posynomial, we obtain our main result below.

\begin{theorem}\label{thm:GP_deaths}
The optimal mobility control strategy $\bu^\star(t)$ that minimizes the function J in \cref{eq:J} while respecting the economic budget $\budget$ and a limit on hospitalizations $\tau_H$ can be obtained by solving the following geometric program:
\begin{align}\label{GP:deaths}
    \begin{aligned}
    \underset{\bu(0),\ldots,\bu(T-1)}{\text{\emph{minimize}}} \quad & \sum_{t=1}^{T-1} \gamma_D^t D(t) + \gamma_{\infty} D(T)\\
    \text {\emph{subject to}} \quad & \sum_{t=0}^{T-1} C_t(\bu(t)) \le \budget \\
      & H(t) \leq \tau_H, \quad t = 1,\ldots,T \\
      & \bu(t) \in \mathcal{U}, \quad t = 0,\ldots,T-1
     \end{aligned}
\end{align}
where $C_t(\bu(t))$ is a posynomial cost function for all $t$, and the set of admissible control actions $\mathcal{U}$ is described by posynomial inequalities and monomial equalities.
\end{theorem}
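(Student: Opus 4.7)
The plan is to recognize the problem \cref{GP:deaths} as already being in the standard form of a geometric program, once the posynomial structure established in \cref{lem:posynomial} is combined with the closure properties of posynomials noted in \cref{sec:background}. The argument is therefore short and essentially bookkeeping; the real content lies in \cref{lem:posynomial} itself, so I do not expect any conceptual obstacle at this stage.

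First I would argue that the objective is a posynomial in the decision variables $\bu(0),\ldots,\bu(T-1)$. By \cref{lem:posynomial} each $D(t)$ is a posynomial in these variables, and since the weights $\gamma_D^t$ and $\gamma_\infty$ are strictly positive and posynomials are closed under addition and positive scalar multiplication, the objective
\[
J = \sum_{t=1}^{T-1}\gamma_D^t D(t) + \gamma_\infty D(T)
\]
is itself a posynomial in the same variables, as required of a GP objective.

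Next I would place each constraint into the standard form of a posynomial $\le 1$ or a monomial $= 1$. The economic budget constraint $\sum_{t=0}^{T-1}C_t(\bu(t))\le \budget$ has a posynomial left-hand side (a sum of posynomials by hypothesis), so dividing both sides by the positive constant $\budget$ yields an inequality of the form (posynomial) $\le 1$. The hospital-capacity constraints $H(t)\le \tau_H$ for $t=1,\ldots,T$ are handled identically, using that each $H(t)$ is a posynomial by \cref{lem:posynomial} and that $\tau_H>0$. Finally, the admissibility constraint $\bu(t)\in\mathcal{U}$ is, by hypothesis, already a collection of posynomial inequalities and monomial equalities, each of which rescales into standard GP form in the same manner.

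With all pieces in GP form, the problem is a geometric program, and by the discussion in \cref{sec:background} it is equivalent, via the logarithmic change of variables, to a convex program that can be solved globally and efficiently by interior-point methods. The only subtlety — which is not really an obstacle in this proof but is the engine of the whole reformulation — is the posynomial dependence of $H(t)$ and $D(t)$ on the \emph{entire history} $\bu(0),\ldots,\bu(t-1)$ supplied by \cref{lem:posynomial}; this in turn hinges on the linearization $S(t)\approx S_0$ that makes the state dynamics multiplicative (rather than quadratic) in the controls, so that iterating \cref{eq:dyn_1}--\cref{eq:D_t} preserves the posynomial class.
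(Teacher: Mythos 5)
Your proposal is correct and follows essentially the same route as the paper, which derives \cref{thm:GP_deaths} directly from \cref{lem:posynomial} together with the closure of posynomials under addition and positive scaling, so that the objective and the budget/hospitalization constraints are posynomial and the problem is a geometric program in standard form. Your normalization of the inequality constraints by $\budget$ and $\tau_H$ and the remark about the dependence of $H(t)$, $D(t)$ on the full control history are consistent with the paper's (largely implicit) argument.
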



The choice of the cost function and the budget $\budget$ is up to the decision maker and, in practice, may be difficult to choose. In the presence of available economic data, such a cost function may be computed via posynomial fitting techniques \cite{boyd2007tutorial}, or directly provided by a decision maker. In any case, we propose a principled approach to choosing a suitable $\budget$ independently of the choice of cost function $C_t(\bu(t))$. This is achieved by solving an auxiliary optimal control problem that aims to find the minimum budget required to keep the number of hospitalized individuals $H(t)$ below a threshold $\tau_H$. This auxiliary optimal control problem is also a geometric program and, hence, can be solved efficiently; this fact is stated in \cref{thm:GP_cost} and the proof follows directly from \cref{lem:posynomial}.

\begin{theorem}\label{thm:GP_cost}
The minimal budget required to keep hospitalizations below a given threshold $\tau_H$ is given by
\begin{align}
    \budget^\star = \sum_{t=0}^{T-1} C_t(\bu^\star(t)),
\end{align}
where $\bu^\star$ is the solution to the program
\begin{align}\label{GP:cost}
    \begin{aligned}
    \underset{\bu(0),\ldots,\bu(T-1)}{\text{\emph{minimize}}} \quad & \sum_{t=0}^{T-1}C_t(\bu(t)) \\
    \text{\emph{subject to}} \quad & H(t) \leq \tau_H, \quad t = 1,\ldots,T \\
      & \bu(t) \in \mathcal{U}, \quad t = 0,\ldots,T-1
     \end{aligned}
\end{align}
where $C_t(\bu(t))$ is a posynomial cost function for all $t$, and the set of admissible control actions $\mathcal{U}$ is described by posynomial inequalities and monomial equalities, which is a geometric program.
\end{theorem}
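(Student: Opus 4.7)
The plan is to directly verify that program \eqref{GP:cost} fits the standard form of a geometric program as recalled in \Cref{sec:background}, using \Cref{lem:posynomial} to handle the state-dependent constraint. Since the proof really is a matter of cataloguing each piece of \eqref{GP:cost} and checking it is a posynomial (or monomial) in the decision variables $\bu(0),\ldots,\bu(T-1)$, no new analytic idea is required; the content is bookkeeping.

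First I would dispatch the objective. By hypothesis, each $C_t(\bu(t))$ is a posynomial in the entries of $\bu(t)$, hence a posynomial in the full decision vector $(\bu(0),\ldots,\bu(T-1))$. Since posynomials are closed under addition, $\sum_{t=0}^{T-1} C_t(\bu(t))$ is itself a posynomial, so the objective is of the form admitted by a GP.

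Next I would handle the hospitalization constraints. By \Cref{lem:posynomial}, for each $t = 1,\ldots,T$, the state $H(t)$ is a posynomial in the entries of $\bu(0),\ldots,\bu(t-1)$, and hence a posynomial in the full decision vector. The inequality $H(t) \le \tau_H$ can therefore be rewritten in standard GP form as
\begin{align*}
\tau_H^{-1}\, H(t) \le 1, \qquad t = 1,\ldots,T,
\end{align*}
which is a posynomial upper bound by $1$, since scaling a posynomial by the positive constant $\tau_H^{-1}$ yields another posynomial.

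Finally, the constraints $\bu(t) \in \mathcal{U}$ are, by hypothesis, expressible as posynomial inequalities and monomial equalities, which fit the standard GP format after the usual normalization (dividing a posynomial inequality $q \le c$ by $c$, and a monomial equality $h = c$ by $c$). Combining all three ingredients shows that \eqref{GP:cost} is a geometric program in the variables $\bu(0),\ldots,\bu(T-1)$, and the optimal value $\sum_{t=0}^{T-1} C_t(\bu^\star(t))$ is, by construction, the minimal cost compatible with the hospitalization cap, proving $\budget^\star$ is as claimed. The only nontrivial step is the posynomial structure of $H(t)$, which is exactly the content of \Cref{lem:posynomial}; everything else is closure of posynomials under the algebraic operations appearing in the program, so I do not anticipate any real obstacle.
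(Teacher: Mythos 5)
Your proposal is correct and follows the same route as the paper, which states that the result ``follows directly from \cref{lem:posynomial}'': the objective and cost constraints are posynomials by closure under addition and positive scaling, the constraints $H(t)\le\tau_H$ are posynomial constraints by \cref{lem:posynomial}, and $\mathcal{U}$ is GP-representable by assumption. Your write-up merely makes explicit the bookkeeping (normalization by $\tau_H^{-1}$, etc.) that the paper leaves implicit, so there is no substantive difference.
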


The budget $\budget^\star$ obtained from \cref{thm:GP_cost} can be seen as a conservative cost which only guarantees that hospital operations remain within capacity, avoiding overflow. The decision maker should then use a budget $\budget \geq \budget^\star$ in \cref{thm:GP_deaths} to obtain a less conservative control input $\bu(t)$.

\subsection{Control simulations}

\begin{figure}[!ht]
    \centering
    \subfloat[Value of minimal-cost control action $\bu^\star(t)$, fixed per week.]{\includegraphics[width=0.31\linewidth]{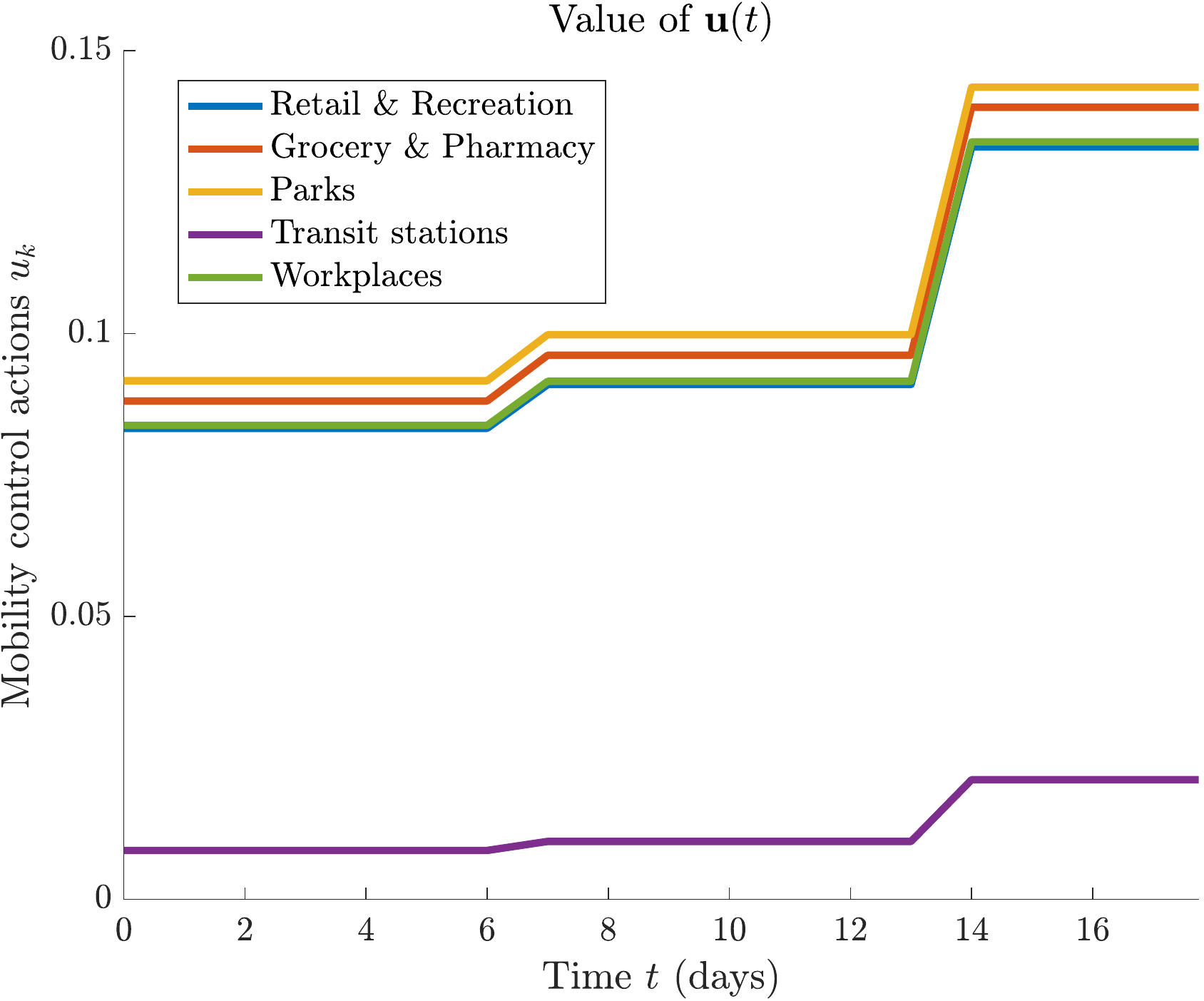}}
    \quad
    \subfloat[Actuation cost $C_t(\bu^\star(t))$ for control action $\bu^\star(t)$, fixed per week.]{\includegraphics[width=0.31\linewidth]{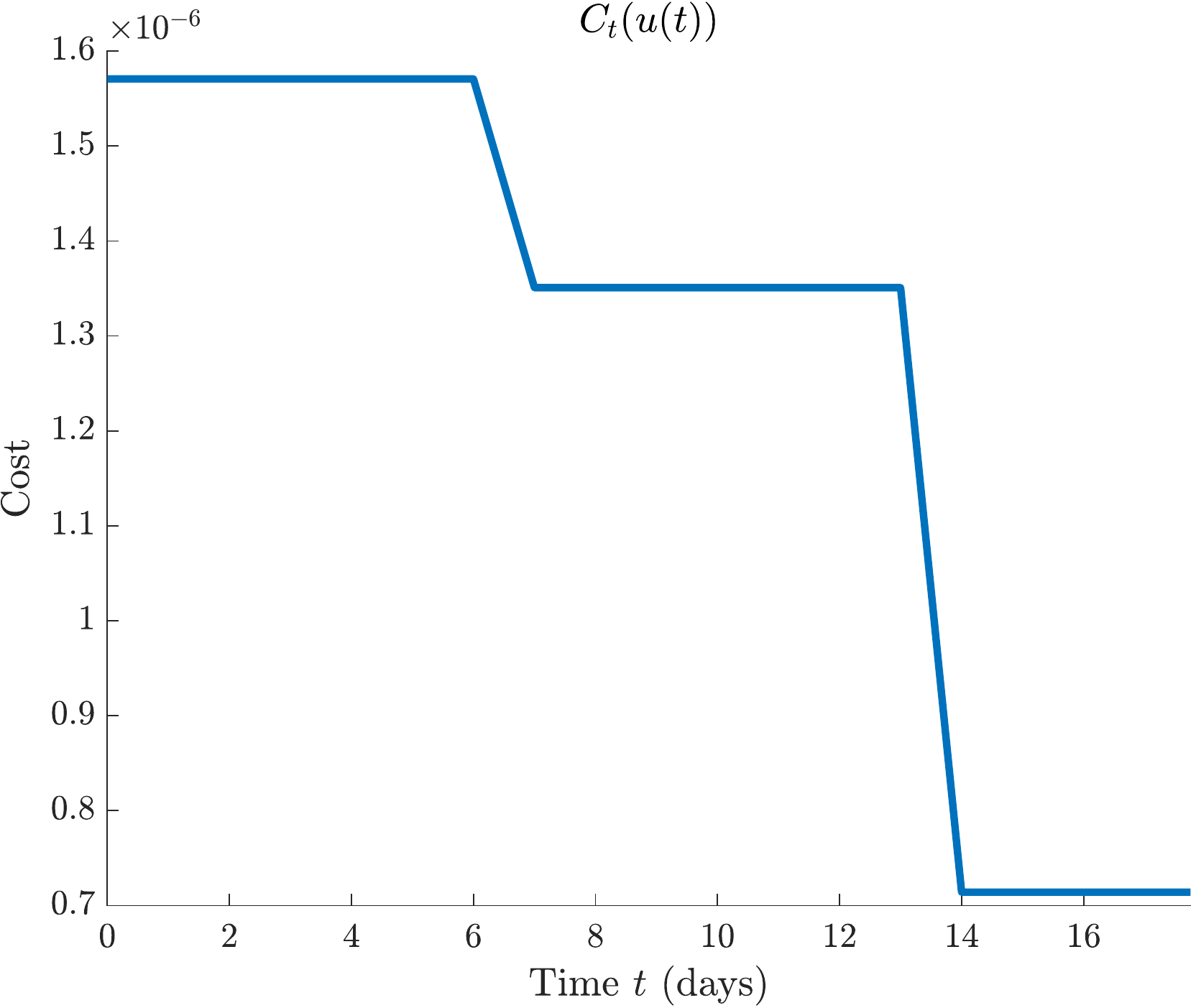}}
    \quad
    \subfloat[Number of hospitalized individuals $H(t)$; available hospital bed threshold $\tau_H$ shown as the dashed red line.]{\includegraphics[width=0.31\linewidth]{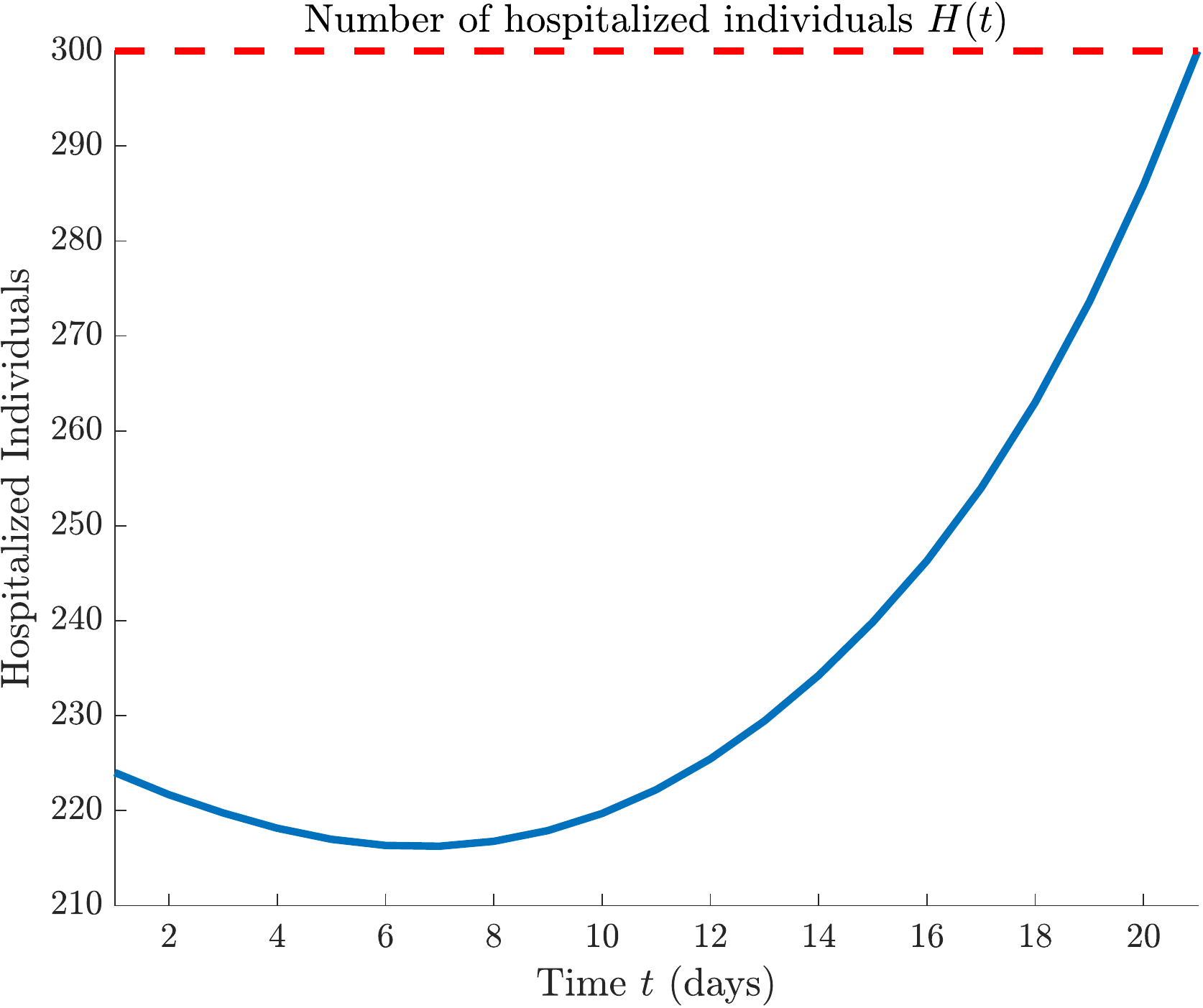}}
    \caption{Minimal-cost control strategy $\bu^\star(t)$ for Philadelphia County, PA, obtained by solving the geometric program~\cref{GP:cost}, fixing the control action to change only at the beginning of each week. The control actions for Retail \& Recreation and Workplaces are similar in Figure (a) and, hence, are overlaid. Parameters of the models used herein were learned as described in \Cref{subsec:learn_simulations}. We obtain the budget $\budget^\star$ to use in geometric program~\cref{GP:deaths} by taking the total cost of the minimal-cost control strategy, i.e., $\budget^\star = \sum_{t=0}^{T-1} C_t(\bu^\star(t))$. }
    \label{fig:GP_cost}
\end{figure}

\begin{figure}[!ht]
    \centering
    \subfloat[Value of optimal minimal-death control action $\bu^\star(t)$, fixed per week.]{\includegraphics[width=0.31\linewidth]{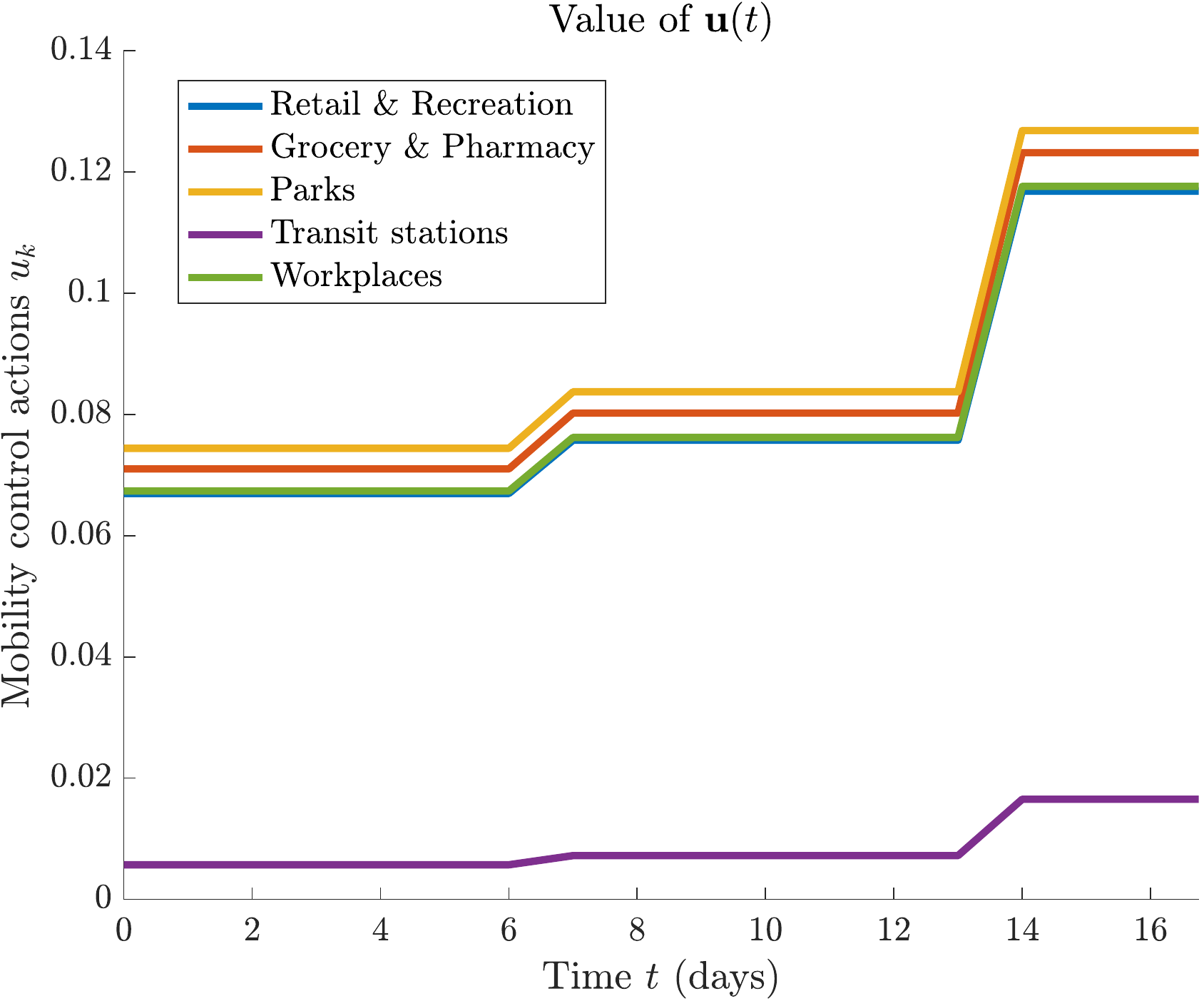}}
    \quad
    \subfloat[Actuation cost $C(\bu^\star(t))$ with control action $\bu^\star(t)$, fixed per week; average daily budget $\budget^\star/T$ shown dashed in red.]{\includegraphics[width=0.31\linewidth]{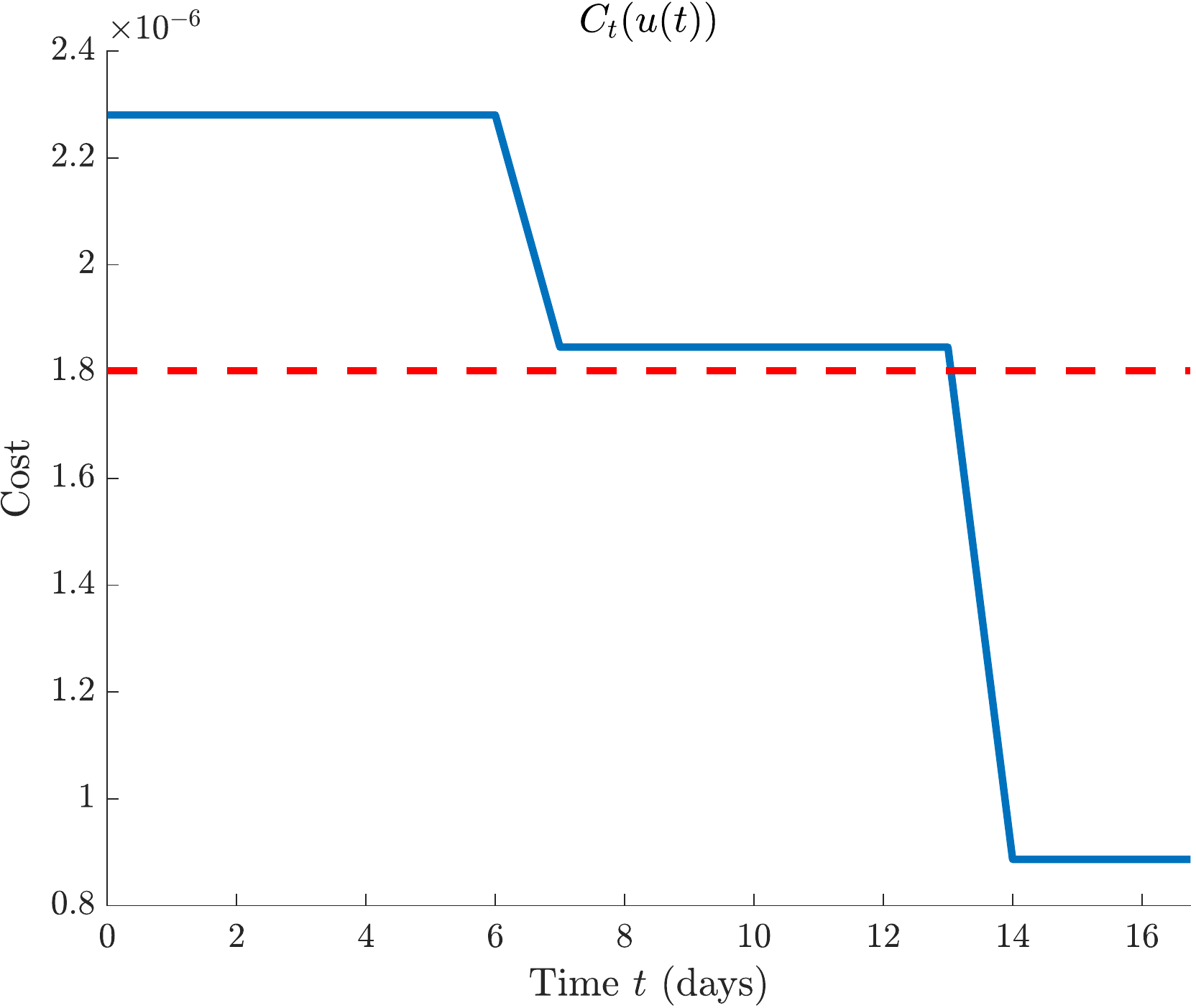}}
    \quad
    \subfloat[Number of cumulative deaths $D(t)$; baseline amount from true mobility data $\mathbf{m}(t)$ shown in red.]{\includegraphics[width=0.31\linewidth]{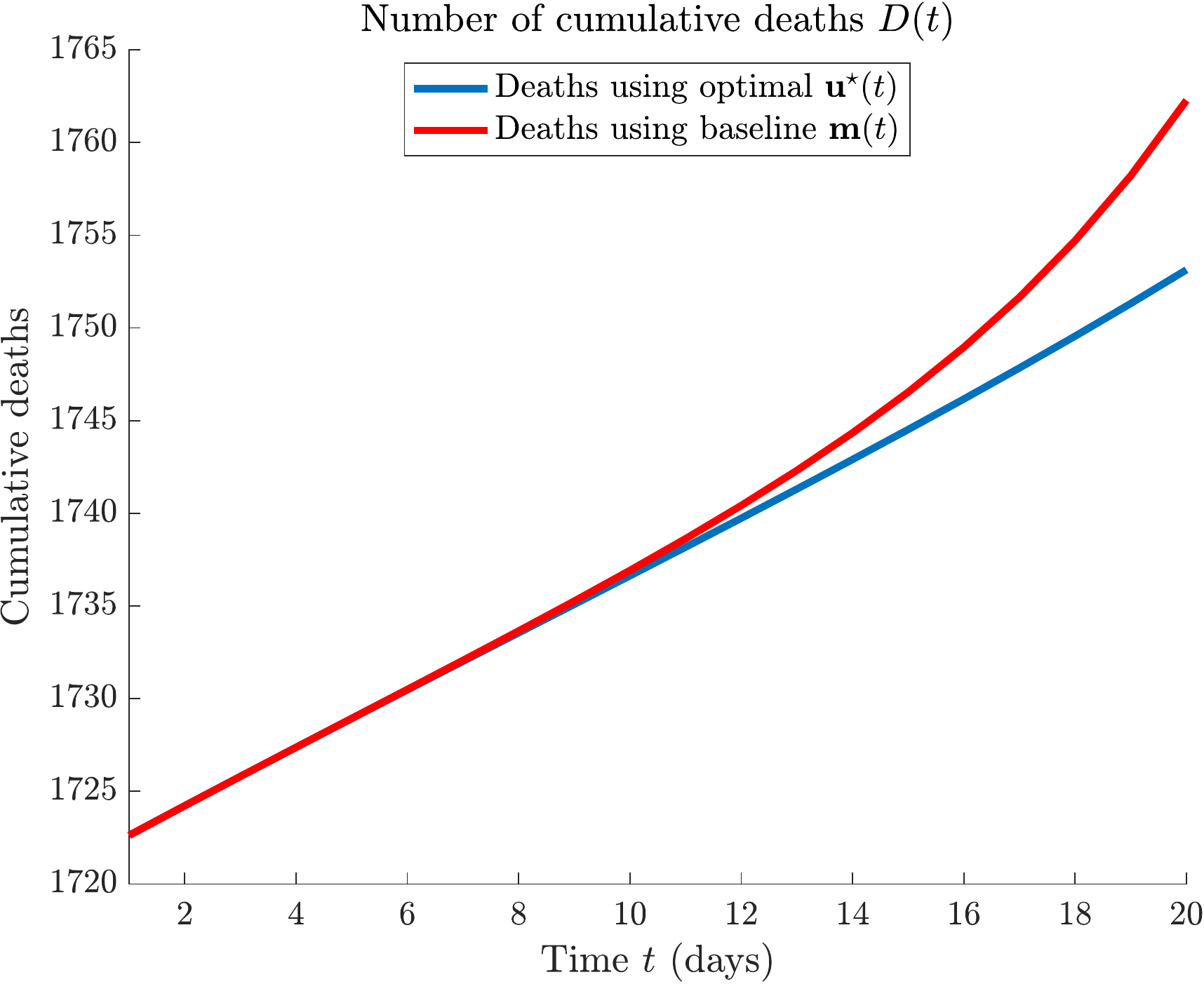}\label{fig:GP_deaths_D}}
    \caption{Optimal minimal-death control strategy $\bu^\star(t)$ for Philadelphia County, PA, obtained by solving the minimal-death GP~\cref{GP:deaths}, fixing the control action to change only at the beginning of each week from September 1st-21st, 2020. The control actions for Retail \& Recreation and Workplaces are similar in Figure (a) and, hence, are overlaid. In Figure (c) we compare the cumulative deaths predicted using the optimal control strategy $\bu^\star(t)$ (shown in blue) with the cumulative deaths predicted based on the true mobility data $\mathbf{m}(t)$ as a baseline (shown in red). Parameters of the models used herein were learned as described in \cref{subsec:learn_simulations}, with the budget $\budget^\star$ taken from the solution to the minimal-cost GP~\cref{GP:deaths}. }
    \label{fig:GP_deaths}
\end{figure}

We demonstrate the effectiveness of our control approach with a case study for the counties in the greater Philadelphia area, illustrated in \cref{fig:GP_cost} and \cref{fig:GP_deaths}. The parameters of our compartmental model as well as the mobility mapping $\beta(\bu(t))$ are learned from data as described in \Cref{subsec:learn_simulations} using the proposed multitask learning framework. Our models are trained using data from July 1st to August 31st, 2020, and tested from September 1st-21st, 2020. In particular, in \cref{fig:GP_deaths_D} we illustrate the number of cumulative deaths predicted using the optimal control strategy $\bu^\star(t)$ as compared to the number of cumulative deaths predicted based on the true mobility data $\mathbf{m}(t)$ from September 1st-21st in Philadelphia.

Since the decision maker may tune the different categories of mobility variables independently, each of the components $k = 1,\ldots,K$ of the mobility control action $\bu(t)$ may vary between a lower bound $\underline{u}_k > 0$ (representing full lockdown) and an upper bound $\overline{u}_k$ (representing no restrictions at all), leading to the set of admissible control actions actions $\mathcal{U} = \{\bu \in \mathbb{R}^K : u_k \in [\underline{u}_k, \overline{u}_k]\}$, which can be simply described by posynomial inequalities. In our simulations, we select the values $\underline{u}_k$ and $\overline{u}_k$ independently for each category based on the mobility data used in our multitask learning framework. Furthermore, as a realistic constraint we only allow the mobility control strategy to vary on a weekly basis, since rapid changes in mobility levels may not be feasible to enforce across a large region (e.g., a county).

In practice the cost function $C_t(\bu(t))$ may be supplied by a decision maker, or may be found via posynomial fitting \cite{boyd2007tutorial} using economic data from a region. In the absence of such data, we choose a time-invariant cost function $C(\bu(t))$ that satisfies the requirements of being convex in log-scale and decreasing, given by
\begin{align}\label{eq:cost_multiple}
    C(\mathbf{u}(t)) = \sum_{k=1}^K c_k\dfrac{u_k(t)^{-1} - \overline{u}_k^{-1}}{\underline{u}_k^{-1} - \overline{u}_k^{-1}},
\end{align}
where $\mathbf{c} = (c_1,\ldots,c_K)$ is a relative cost weighting of the mobility categories. This relative cost could reflect that visits lost to, for example, healthcare facilities are more costly than visits lost to retail venues. Notice that the inequality $\sum_{t=0}^{T-1}C(\bu(t)) \leq \budget$ is equivalent to $\sum_{t=0}^{T-1}\sum_{k=1}^K\frac{c_ku_k(t)^{-1}}{\underline{u}_k^{-1} - \overline{u}_k^{-1}} \leq \budget + \sum_{k=1}^K\frac{\overline{u}_k^{-1}}{\underline{u}_k^{-1} - \overline{u}_k^{-1}}$; since the left-hand side of the inequality is a posynomial in the entries of $\bu(t)$ and the right-hand side is constant, it can be readily handled by a geometric program. Similarly, minimizing $\sum_{t=0}^{T-1}C(\bu(t))$ is equivalent to minimizing $\sum_{t=0}^{T-1}\sum_{k=1}^K\frac{c_ku_k(t)^{-1}}{\underline{u}_k^{-1} - \overline{u}_k^{-1}}$, which is again a posynomial function amenable to geometric programming.

\section{Conclusion and Future Work} \label{sec:conclusion}

In this paper we present a data-driven learning and optimal control framework that aims to bridge the gap between optimal control theory of epidemic models and applicable data-driven models for analyzing the spread of COVID-19. To identify the parameters of our model, we propose a multitask learning approach that leverages mobility and epidemic data from multiple regions to capture how daily changes in mobility patterns affect the spread of the disease, and to accurately predict the resulting daily and cumulative deaths. Using this data-driven model we present an optimal control framework using geometric programming to efficiently design non-pharmaceutical interventions to limit the spread of the epidemic while obeying a budget constraint on the economic loss incurred. Furthermore, we present a principled method for determining such a budget based on eliminating excess deaths due to over-utilization of hospital resources. We validate both our model and our control framework in a case study on the greater Philadelphia area.

In the future, this work could be extended to accommodate for robustness considerations as well as stochastic transitions in the epidemic layer, which introduce the additional challenge of expressing chance constraints as posynomial functions. The success of geometric programming in our work comes from expressing states of the system as posynomials on the mobility variables, allowing for the potential extension to models with more sophisticated mappings from human-mobility to epidemic dynamics. For example, generalized geometric programming admits functions that are max-monomials or posynomials with fractional exponents \cite{boyd2007tutorial}, which opens the door to modeling epidemic dynamics and human-mobility using ReLU neural networks or posynomial approximations to arbitrary functions. Furthermore, since geometric programs can be solved efficiently, our approach could be applied to models with higher dimensionality; for example, networked metapopulation models which can make use of more granular datasets.

\appendix

\section{Proof of \Cref{lem:posynomial}}\label{app:lem_posynomial_proof}
We can rewrite equations \cref{eq:dyn_1}-\cref{eq:dyn_4} in matrix form by defining a state vector $\mathbf{y}(t) = \left[E\left(t\right) ,  I\left(t\right), A\left(t\right),  H\left(t\right)\right]^{\intercal}$ to obtain the dynamics
\begin{align}
    \mathbf{y}(t+1) &=\left[\begin{array}{cccc}
\!1-\rho_{EI}-\rho_{EA} & S_{0}\beta\left(\bu(t)\right) & \gamma_{A}S_{0}\beta\left(\bu(t)\right) & 0\\
\rho_{EI} & 1-\rho_{IR}-\rho_{IH} & 0 & 0\\
\rho_{EA} & 0 & 1-\rho_{AR} & 0\\
0 & \rho_{IH} & 0 & 1-\rho_{HR}-\rho_{HD}\!
\end{array}\right]\!\mathbf{y}\left(t\right) \nonumber \\
&=: M_{t}\mathbf{y}(t). \label{eq:matrices}
\end{align}
It follows that 
\begin{align*}
    H\left(t\right)=\left[
0, 0, 0, 1\right]M_{t-1}\cdots M_{1}M_{0}\left[
E\left(0\right), I\left(0\right), A\left(0\right), H\left(0\right)\right]^{\intercal}= f_{H}^{t}\left(\left\{ u\left(k\right)\right\} _{k=0}^{t-2}\right).
\end{align*}
We assume that $\rho_{EI} < 1, \rho_{IR} + \rho_{IH} < 1, \rho_{AR} < 1$, and $\rho_{HR} + \rho_{HD} < 1$, which is reasonable because these numbers capture the fraction of agents in each compartment that transition to other compartments, which must be less than one for the model to be meaningful. Recalling that the mobility mapping $\beta(\bu(t))$ is a posynomial, each of the matrices $M_t$ have posynomial entries on $\beta(\bu(t))$, and thus, on $\bu(t)$; thus, it follows that the function $f_{H}^{t}\left(\left\{ u\left(k\right)\right\} _{k=0}^{t-2}\right)$ is also a posynomial on $\bu(t)$ as it is the product of matrices with entries that are posynomials on $\bu(t)$. Moreover, from \cref{eq:D_t} we can see that $D(t)$ is simply a sum of positive constants and positive scalar multiples of $H(t)$ and is also a posynomial.



\bibliographystyle{plain}
\bibliography{references,gp_and_data}

\end{document}